\long\def\metanote#1#2{{\color{#1}\
\ifmmode\hbox\fi{\sffamily\mdseries\upshape [#2]}\ }}
\newcommand{\ra}{\rightarrow}
\newcommand{\thh}{\text{th}}
\newcommand{\be}{\begin{equation}}
\newcommand{\ee}{\end{equation}}
\newcommand{\bi}{\begin{itemize}}
\newcommand{\ei}{\end{itemize}}
\newcommand{\m}{\mbox{$\bf m $}}
\newcommand{\Law}{{\mathcal{L}}}
\newcommand{\calT}{{\mathcal{T}}}
\newcommand{\calM}{{\mathcal{M}}}
\newcommand{\calO}{{\mathcal{O}}}
\newcommand{\calP}{{\mathcal{P}}}
\newcommand{\Nm}{{\mathbb{N}}}
\newcommand{\Rm}{{\mathbb R}}
\newcommand{\Pm}{{\mathbb P}}
\newcommand{\expE}{{\mathbb E}}
\newcommand{\Ind}{{\mathbbm{1}}}
\newtheorem{theo}{Theorem}[section]
\newtheorem{defin}[theo]{Definition}
\newtheorem{prop}[theo]{Proposition}
\newtheorem{cor}[theo]{Corollary}
\newtheorem{assum}[theo]{Assumption}
\newtheorem*{rep@theorem}{\rep@title}
\newcommand{\newreptheorem}[2]{%
\newenvironment{rep#1}[1]{%
 \def\rep@title{#2 \ref{##1}}%
 \begin{rep@theorem}}%
 {\end{rep@theorem}}}
\long\def\metanote#1#2{{\color{#1}\
\ifmmode\hbox\fi{\sffamily\mdseries\upshape [#2]}\ }}
\begin{document}
\setcounter{page}{1}

\title{Stochastic Approximation of the Paths of Killed Markov Processes Conditioned on Survival}
\author{Oliver Tough}
\date{\today}

\maketitle

\begin{abstract}
Reinforced processes are known to provide a stochastic representation for the quasi-stationary distribution of a given killed Markov process - describing the killed Markov process at fixed time instants. In this paper we shall adapt the construction to provide a pathwise description. We also obtain a stochastic approximation for the quasi-limiting distributions of reducible killed Markov processes as a corollary.
\end{abstract}

\section{Introduction}\label{section:introduction}

We consider in discrete time a killed Markov process $(X_t)_{0\leq t<\tau_{\partial}}$ on the state space $\chi\cup \partial$, evolving in $\chi$ until the killing time $\tau_{\partial}=\inf\{t>0:X_{t}\in \partial\}$, after which time it remains in the cemetery state (we assume without loss of generality that $\partial$ is a one-point set). Note that whilst we specify that time is discrete, the results of this paper may be applied in continuous time by discretising time. 

In general, one is interested in the law of the killed Markov process (with initial condition $X_0\sim\mu\in\calP(\chi)$) conditioned on survival,
\[
\Law_{\mu}(X_t\lvert \tau_{\partial}>t),
\]
and the long-time limits of this law,
\[
\Law_{\mu}(X_t\lvert \tau_{\partial}>t)(\cdot)\ra \pi(\cdot)\quad \text{as}\quad t\ra\infty.
\]
General conditions for the existence and uniqueness of these limits are given by \cite{Champagnat2014} and \cite{Champagnat2018}. In general, these limits correspond to quasi-stationary distributions (QSDs) $\pi$,
\[
\Law_{\pi}(X_t\lvert \tau_{\partial}>t)(\cdot)=\pi(\cdot)\quad\text{for all}\quad 0\leq t<\infty.
\] 
Aldous, Flannery and Palacios \cite{Aldous1988} introduced a method for simulating QSDs based on reinforced processes. The reinforced process, $(Y_t)_{0\leq t<\infty}$, is obtained by running a copy of $X_t$ until it is killed,
\begin{equation}\label{eq:reinforced process up to the killing time}
(Y_t)_{0\leq t<\tau_{\partial}}=(X_t)_{0\leq t<\tau_{\partial}}.
\end{equation}
At this killing time, $Y_t$ jumps to a point sampled independently from the empirical measure of the history,
\begin{equation}\label{eq:reinforced process sampling from the history}
Y_{\tau_{\partial}}\sim \frac{1}{\tau_{\partial}}\sum_{s=0}^{\tau_{\partial}-1}\delta_{Y_s}(\cdot).
\end{equation}

This is then repeated inductively, so that at the $n^{\thh}$ killing time $\tau^n_{\partial}$, $Y_t$ jumps to a point sampled independently from the empirical measure of the history of $Y_t$ up to that killing time,
\begin{equation}\label{eq:nth sampling reinforced process}
Y_{\tau^n_{\partial}}\sim \frac{1}{\tau^n_{\partial}}\sum_{s=0}
^{\tau^n_{\partial}-1}\delta_{Y_s}(\cdot)ds,
\end{equation}
and continues evolving like a copy of $X_t$ as before. In \cite{Aldous1988} they established that these provide an approximation method for the QSDs of irreducible killed Markov chains when the state space is finite and time is discrete, proving that
\begin{equation}\label{eq:limit of historical measure of reinforced processes}
\frac{1}{t}\sum_{s=0}^{t-1}\delta_{Y_s}(\cdot)ds\overset{a.s.}{\ra} \pi(\cdot)\quad \text{as}\quad t\ra\infty.
\end{equation}
This was made quantitative for a more general version of this algorithm by Benaim and Cloez \cite{Benaim2015}, and has since been extended to a quite general setting in \cite{Mailler2020} and \cite{Benaim2019}. These results do not, however, apply to reducible killed Markov processes. We shall obtain in Corollary \ref{cor:convergence of reinforced with renewal} a stochastic approximation for the quasi-limiting distributions of reducible killed Markov processes for given initial condition as a corollary of our main result, Theorem \ref{theo:main theorem}.

Note, however, that QSDs only describe killed Markov processes at fixed time instants. One may also be interested in obtaining pathwise information, so we may seek to approximate
\[
\Law_{\mu}((X_s)_{0\leq s\leq t}\lvert \tau_{\partial}>t)(\cdot)
\]
for finite $t$. In this note, we shall demonstrate how the construction of reinforced processes may be adapted to provide such a pathwise approximation. Since our result (Theorem \ref{theo:main theorem}) is restricted to discrete time, for continuous time processes we obtain (for any $m<\infty$) an approximation for
\[
\Law((X_0,X_{\frac{t}{m}},X_{\frac{2t}{m}},\ldots,X_t)\lvert \tau_{\partial}>t).
\]

One may also consider the $Q$-process, which provides a pathwise description of $(X_t)_{0\leq t<\tau_{\partial}}$ conditioned never to be killed, a definition of which is given in \cite[Theorem 3.1]{Champagnat2014}. 

A second method for approximating QSDs is given by the Fleming-Viot process, a particle system introduced by Burdzy, Holyst and March in \cite{Burdzy2000}. They considered the case whereby the killed Markov process is Brownian motion in an open, bounded domain, killed instantaneously upon contact with the boundary. They established in \cite{Burdzy2000} that this particle system provides an approximation method for both the distribution of this killed Brownian motion conditioned on survival at fixed instants of time, and the corresponding QSD. This was later extended to a general setting by Villemonais \cite{Villemonais2011}. In \cite{Bieniek2018}, Bieniek and Burdzy established that the Fleming-Viot process also provides for the distribution of the path of a killed Markov process conditioned to survive over a fixed time interval. Since then, the present author established in \cite[Corollary 5.1]{Tough2021} that the Fleming-Viot process also provides a representation for the $Q$-process when the killed Markov process is a normally reflected diffusion in a compact domain, killed at position-dependent Poisson rate (this result may be extended to a more general setting, subject to overcoming an additional difficulty if the state space is non-compact). 

Thus, whilst both reinforced processes and the Fleming-Viot process are known to provide an approximation method for the QSDs of killed Markov processes - thereby describing killed Markov processes at fixed time instants - prior to the present paper only the Fleming-Viot process was known to provide a pathwise description. In the present paper, we shall show how the construction of reinforced processes may be adapted to provide such a pathwise description.

Whereas reinforced processes are constructed by sampling a spatial location from the history, we adapt this construction by sampling both the spatial and temporal location (or, with some probability, sampling a killed Markov process started from some fixed initial distribution at time $0$ instead). We refer to the resultant constructions as reinforced path processes. A more precise definition is given by the following.

Throughout this paper we abuse notation by writing $[a,b]$ for $[a,b]\cap \Nm$, for all $a,b\in \Nm$. For paths $f$ and $g$ on the time intervals $I$ and $J$ respectively such that $f=g$ on $I\cap J$, we define
\begin{equation}\label{oplus}
f\oplus g:I\cup J\ni t\mapsto \begin{cases}
f(t),\quad t\in I\\
g(t),\quad t\in J
\end{cases}.
\end{equation}

\begin{defin}[Reinforced Path Process]\label{defin:reinforced path process}
We fix a time horizon $0<T\leq \infty$, a renewal probability $0<p<1$ and initial condition $\mu\in\calP(\chi)$. The reinforced path process $(u_n)_{n=1}^{\infty}$ is defined by inductively sampling triples $u_n=(t_b^n,f^n,t^n_d)$, whereby $t_b^n$ is the $n^{\text{th}}$ birth time, $t^n_d$ is the $n^{\text{th}}$ killing time, and $f^n$ is the $n^{\text{th}}$ path from time $0$ to time $t^n_d$. Whilst $u_n$ is considered to be "alive" only between times $t^n_b$ and $t^n_d$, its path $f^n$ is defined prior to time $t^n_b$ - it includes an "ancestral path". The first triple $u_1=(t^1_b,f^1,t^1_d)$ is defined by taking a copy $(X_t)_{0\leq t<\tau_{\partial}}$ of the killed Markov process with initial condition $X_0\sim \mu$, and defining $u_1=(t^1_b,f^1,t^1_d)$ to be $(0,(X_{t\wedge (\tau_{\partial}-1)})_{0\leq t\leq T},\tau_{\partial}\wedge (T+1))$. 

Given $u_1,\ldots,u_n$ we inductively define $u_{n+1}=(t^{n+1}_b,f^{n+1},t^{n+1}_d)$ as follows. With probability $p$, we take another independent copy $(X_t)_{0\leq t<\tau_{\partial}}$ of the killed Markov process with initial condition $X_0\sim \mu$, and define $u_{n+1}=(t^{n+1}_b,f^{n+1},t^{n+1}_d)$ to be $(0,(X_{t\wedge (\tau_{\partial}-1)})_{0\leq t\leq T},\tau_{\partial}\wedge (T+1))$, in which case we say that we "renew". Otherwise, with probability $1-p$, we choose $m\in [1,n]$ independently with probability 
\[
\frac{t^m_d-t^m_b}{\sum_{1\leq \ell\leq n}(t^{\ell}_d-t^{\ell}_b)}.
\]
Given our choice of $m$, we then choose $t'\in [t^m_b,t^m_d-1]$ independently at random. Given the choice of $m$ and $t'$, we independently take $(X_t)_{0\leq t<\tau_{\partial}}$ to be a copy of the killed Markov process with initial condition $X_0=f^m(t')$, and set
\[
u_{n+1}=(t^{n+1}_b,f^{n+1},t^{n+1}_d):=(t',f^m_{\lvert_{[0,t']}}\oplus (X_{(t-t')\wedge (\tau_{\partial}-1)})_{t'\leq t\leq T},(t'+\tau_{\partial})\wedge (T+1)).
\]

\end{defin}

Clearly, one may formulate the same construction in continuous time. We depict the reinforced path process corresponding to Brownian motion killed at the boundary of a bounded interval in Figure \ref{fig:renewal tree simple version}.
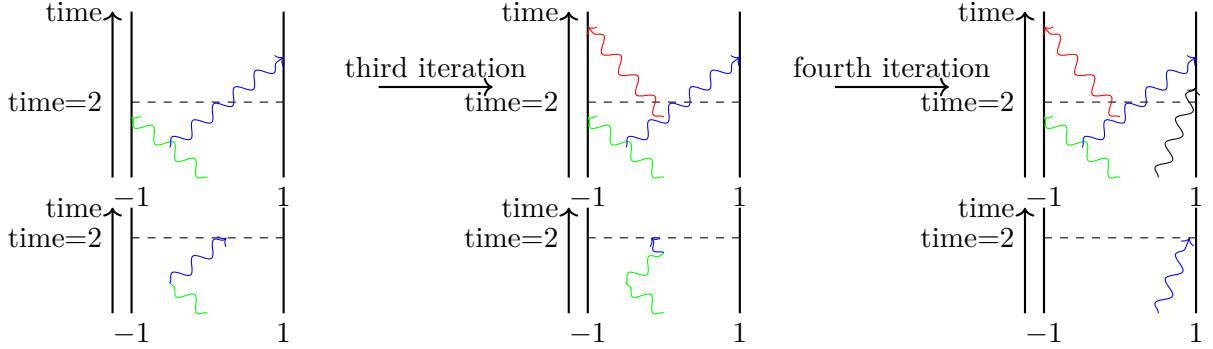
\begin{figure}[h]
  \begin{center}
    \begin{tikzpicture}[xscale=0.5,yscale=0.4]
\foreach \y [count=\n]in {-5,7,19}{
\draw[-,dashed,black](-7+\y,2.5) -- ++ (4,0);
\node[left] at (-7.5+\y,2.5) {time=$2$};
\draw[-,thick,black](-7+\y,0) -- ++(0,5.5); 
\draw[-,thick,black](-3+\y,0) -- ++(0,5.5);
\draw[->,thick,black](-7.5+\y,0) -- ++(0,5.5);
\node[left] at (-7.5+\y,5.5) {time};

 			\draw[->,snake=snake,green](-5+\y,0) -- ++(-2,2);

			\draw[->,snake=snake,blue](-6+\y,1) -- ++(3,3);
\node[below] at (-7+\y,0) {$-1$};
\node[below] at (-3+\y,0) {$1$};}
\foreach \y [count=\n]in {-5}{
\node[above] at (\y+1,3) {third iteration};
\node[above] at (\y+13,3) {fourth iteration};
\draw[->,thick,black](-0.5+\y,3) -- ++(3,0); 
\draw[->,thick,black](-0.5+12+\y,3) -- ++(3,0);
}
\foreach \y [count=\n]in {7}{
\draw[->,snake=snake,black](\y+8,0) -- ++(1,3);
\foreach \z [count=\n]in {0,12}{
			\draw[->,snake=snake,red](-5+\y+\z,2) -- ++(-2,3);}			}
\foreach \y [count=\m] in {-4.5}{
\foreach \x [count=\n]in {-5,7,19}{
\draw[-,dashed,black](-7+\x,2.5+\y) -- ++ (4,0);
\node[left] at (-7.5+\x,2.5+\y) {time=$2$};
\draw[-,thick,black](-7+\x,\y) -- ++(0,3.5); 
\draw[-,thick,black](-3+\x,\y) -- ++(0,3.5);
\draw[->,thick,black](-7.5+\x,\y) -- ++(0,3.5);
\node[left] at (-7.5+\x,3.5+\y) {time};

\node[below] at (-7+\x,\y) {$-1$};
\node[below] at (-3+\x,\y) {$1$};
}}

\foreach \y [count=\m] in {-4.5}{
\foreach \x [count=\n]in {7}{
 			\draw[-,snake=snake,green](-5+\x,\y) -- ++(-1,1);
			\draw[-,snake=snake,green](-6+\x,1+\y) -- ++(1,1);
			\draw[->,snake=snake,blue](-5+\x,2+\y) -- ++(-2/6,0.5);			
}}
\foreach \y [count=\m] in {-4.5}{
\foreach \x [count=\n]in {-5}{
 			\draw[-,snake=snake,green](-5+\x,\y) -- ++(-1,1);
			\draw[->,snake=snake,blue](-6+\x,1+\y) -- ++(1.5,1.5);
}}

\foreach \y [count=\m] in {-4.5}{
\foreach \x [count=\n]in {7}{
\draw[->,snake=snake,blue](\x+8,\y) -- ++(5/6,2.5);
}}
		    \end{tikzpicture}
    \caption{The killed Markov process here is Brownian motion in $(-1,1)$, killed instantaneously at $\{-1,1\}$. The third and fourth iteration are shown above, with the first, second, third and fourth paths in green, blue, red and black respectively. We may see that in the second and third iteration we sample from the history, whilst with the fourth we begin a new particle from time $0$ - we renew. While each path is considered to be alive only between the time it is born (the sampled time) and the time it is killed at the boundary, it also carries with it information of its ancestral path from time $0$ to the time it is born. Thus the three paths corresponding to time $2$ after $4$ iterations are shown below (the first path is not alive at time $2$), with the path while they are alive in blue and the ancestral path before they are born in green.}
  \label{fig:renewal tree simple version}
  \end{center}
\end{figure}

\subsection*{Structure of the paper}

We will provide a statement of Theorem \ref{theo:main theorem} in Section \ref{section:theorem statement}, which establishes that reinforced path processes provide an approximation for $\Law_{\mu}((X_s)_{0\leq s\leq t}\lvert \tau_{\partial}>t)(\cdot)$, thus giving a pathwise description of killed Markov processes conditioned on survival. We will then use this in Corollary \ref{cor:convergence of reinforced with renewal} to provide an approximation for the quasi-limiting distributions of reducible killed Markov processes.

We provide a proof of Theorem \ref{theo:main theorem} in Section \ref{section:theorem proof inf time}.

\section{Statement of Results}\label{section:theorem statement}

For any topological space $\calT$, we write $\calM(\calT)$ and $\calP(\calT)$ for the set of Borel measures (respectively Borel probability measures) on $\calT$, equipped with the topology of weak convergence of measures. 

We assume that $(\chi,d)$ is a metric space. We define $K:\chi\ra \calM(\chi)$ to be a submarkovian transition kernel, which defines the discrete-time killed Markov process $(X_t)_{0\leq t<\tau_{\partial}}$. We fix an initial condition $\mu\in\calP(\chi)$, (possibly infinite) time horizon $T\in \Nm\cup\{\infty\}$ and renewal probability $0<p\leq 1$. We take a reinforced path process $(u_n)_{n=1}^{\infty}=((t^n_b,f^n,t^n_d))_{n=1}^{\infty}$ - that is a solution to Definition \ref{defin:reinforced path process} - corresponding to these choices.

We write $\bar \chi$ for the completion of $\chi$. We define $C_b(\chi)$, $C_0(\bar \chi)$ and $C_0(\chi)$ to be the space of bounded, continuous functions on $\chi$ (respectively continuous functions on $\bar \chi$ which vanish on $\partial \chi:=\bar\chi\setminus \chi$, and the restriction to $\chi$ of elements of $C_0(\bar \chi)$), all equipped with the uniform norm.

We impose the following assumption.

\begin{assum}\label{assum:ext to compact completion}
The completion $\bar \chi$ is compact. Moreover we assume that $(\chi\ni x\mapsto Kf(x):=K(x,f))\in C_0(\chi)$ for all $f\in C_b(\chi)$. Furthermore if $T<\infty$ we assume that $\Pm_{\mu}(\tau_{\partial}>T)>0$, whilst if $T=\infty$ we assume that  $\Pm_{\mu}(\tau_{\partial}>t)>0$ for all $t<\infty$.
\end{assum}

Thus $K$ defines a contraction operator 
\[
K:\chi\ni x\mapsto Kf(x):=K(x,f)\in C_0(\chi)\subseteq C_b(\chi)
\]
with spectral radius $r(K)$. If $T=\infty$ then we impose the following additional assumption.
\begin{assum}\label{assum:inf time horizon}
The operator $K:C_b(\chi)\ra C_b(\chi)$ is compact, and $\Pm_x(\tau_{\partial}=\infty)=0$ for all $x\in\chi$.
\end{assum}

\begin{theo}\label{theo:main theorem}
There exists a unique solution, $Z$, to
\begin{equation}\label{eq:fixed point eqn for Z}
z=p\sum_{s=0}^T\Pm_{\mu}(\tau_{\partial}>s)\Big[1-\frac{1-p}{z}\Big]^{-(s+1)},\quad z\in [1,\infty).
\end{equation}

We define the coefficients $(\gamma_t)_{0\leq t\leq T}$ (or $(\gamma_t)_{0\leq t<\infty}$ if $T=\infty$) to be
\begin{equation}\label{eq:formula for gamma}
\gamma_t=p\Pm_{\mu}(\tau_{\partial}>t)\Big[1-\frac{1-p}{Z}\Big]^{-(t+1)},
\end{equation}
whereby $Z$ is the unique solution to \eqref{eq:fixed point eqn for Z}. Then for any $t\in [0,T]$ (or $t\in \Nm$ if $T=\infty$) we have
\begin{equation}\label{eq:convergence in theorem statement}
\frac{1}{N}\sum_{n=1}^N\Ind(t^n_b\leq t<t^n_d)\delta_{f^n_{[0,t]}}(\cdot)\overset{\calM(F([0,t];\chi))}{\ra} \gamma_t\Law_{\mu}((X_s)_{0\leq s\leq t}\lvert \tau_{\partial}>t)(\cdot)\quad \text{almost surely as}\quad N\ra\infty.
\end{equation}
\end{theo}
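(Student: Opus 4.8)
The plan is to realize the reinforced path process as a measure-valued Pólya urn and apply a convergence theorem for such urns, then identify the deterministic limit via the fixed-point machinery laid out in Section \ref{section:general strategy}. Let me think through each piece.

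Let me understand the structure. We sample triples $u_n = (t_b^n, f^n, t_d^n)$. At each step, with probability $p$ start fresh from $\mu$; else with probability $1-p$ pick a previous index $m$ with probability proportional to its lifetime $t_d^m - t_b^m$, pick a uniform time $t'$ in $[t_b^m, t_d^m - 1]$, and continue the Markov process from there. The reinforcement weight of a path is its lifetime $t_d - t_b$, which is exactly the number of times it's "alive."

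Let me identify the limit equation. Following Step 2, the empirical measure of paths alive at time $t$, namely $\frac{1}{N}\sum \Ind(t_b^n \le t < t_d^n)\delta_{f^n_{[0,t]}}$, should converge to $m_t$. The sampling probabilities: $p^m = p$ (fixed). The sampled-time distribution $\xi^m$: we pick $m$ proportional to lifetime, then $t'$ uniform on that lifetime — so jointly, sampling a (path, time) pair is uniform over all alive-path-instants. This means $\frac{m_s(df)}{|m_s|}\xi^m(ds)$ becomes proportional to $m_s(df)\,ds$ over all $s$, normalized by total mass $\sum_s |m_s| = \sum_s \gamma_s$.
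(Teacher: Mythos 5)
Your plan correctly identifies the paper's own route: formulate the process as a measure-valued P\'olya urn, invoke a convergence theorem for such urns (here \cite[Theorem 1]{Mailler2020}), and then identify the deterministic limit through the fixed-point machinery of Section \ref{section:general strategy}. Your observation that choosing $m$ proportionally to lifetime and then $t'$ uniformly within the lifetime amounts to sampling uniformly over alive path-instants, so that the normalization is $\sum_s \lvert m_s\rvert$, is also correct. However, the proposal stops exactly where the proof has to begin; what you have written is essentially a restatement of the heuristic strategy, and every substantive step is missing. Concretely: (a) existence and uniqueness of $Z$ solving \eqref{eq:fixed point eqn for Z} is itself an assertion of the theorem and needs an argument --- for $T<\infty$ a monotonicity/intermediate-value argument on $f_T(z)=z-p\sum_{s=0}^T\Pm_{\mu}(\tau_{\partial}>s)[1-\frac{1-p}{z}]^{-(s+1)}$, and for $T=\infty$ one must first show the series even converges, which fails for $z\leq\frac{1-p}{1-\lambda_0}$ and forces an analysis via the Perron--Frobenius eigenvalue of the communicating classes of $K$ (this is Proposition \ref{prop:exist of a fixed point Z inf time horizon} in the paper). (b) The urn theorem has hypotheses; the substantive one is the ergodicity-type assumption (A3), which the paper verifies by observing that the normalized mean replacement kernel generates an irreducible killed Markov chain on a finite state space, whose QSD is unique with uniform convergence by \cite{Darroch1967}. (c) That uniqueness is not a technicality: it is precisely what lets one pass from ``the Ansatz is \emph{a} fixed point'' to ``the Ansatz is \emph{the} limit'' (step 4 of the general strategy). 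Without it, exhibiting a solution of \eqref{eq:fixed point eqn with p} identifies nothing.

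(d) Finally, the computation itself is absent: one must plug the Ansatz $\tilde{\nu}(\cdot)=\sum_t\frac{\gamma_t}{Z}\Law_{\mu}((t,(X_s)_{0\leq s\leq t})\lvert\tau_{\partial}>t)(\cdot)$ into the eigenmeasure equation \eqref{eq:left e-measure eqn Y}, use identity \eqref{eq:identity for law of path started from other path} to collapse the conditional laws, and then carry out the telescoping geometric sum
\begin{equation*}
p\Pm_{\mu}(\tau_{\partial}>s)\Big(1+\tfrac{1-p}{Z}\sum_{t=0}^{s}\big[1-\tfrac{1-p}{Z}\big]^{-(t+1)}\Big)=p\Pm_{\mu}(\tau_{\partial}>s)\big[1-\tfrac{1-p}{Z}\big]^{-(s+1)}=\gamma_s,
\end{equation*}
which is simultaneously where the formula \eqref{eq:formula for gamma} for $\gamma_t$ and the mass-consistency equation $Z=\sum_s\gamma_s$ (i.e.\ \eqref{eq:fixed point eqn for Z}) come from; your proposal never derives either. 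You also do not address the $T=\infty$ case, where the urn formulation requires a truncation device (the paper fixes an arbitrary $\bar T$ and collapses all times beyond $\bar T$ onto an auxiliary coordinate $\ast$) before the finite-state-space arguments apply. In short, the approach is the right one --- the same as the paper's --- but as it stands this is a plan, not a proof.
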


Note that if we have a continuous-time killed Markov process $(X_t)_{0\leq t<\tau_{\partial}}$, the above theorem can be applied to obtain, for any $m\in \Nm_{>0}$ and $t\in \Rm_{>0}$, a stochastic approximation for 
\[
\Law_{\mu}((X_0,X_{\frac{t}{m}},X_{\frac{2t}{m}},\ldots,X_t)\lvert \tau_{\partial}>t).
\]

\subsection*{Choosing $p$}

We may ask how $p$ should be chosen. We consider the case whereby $T<\infty$, and assume that the killed Markov process satisfies the consequence of \cite[Theorem 2.1]{Champagnat2017}. Thus we assume that there exists a right eigenfunction $\phi$, positive and bounded on $\chi$, such that 
\begin{equation}\label{eq:asymptotics of killing probability}
r(K)^{-t}\Pm_{\mu}(\tau_{\partial}>t)\ra \phi(\mu)\quad\text{exponentially quickly, uniformly over all}\quad \mu\in\calP(\chi).
\end{equation}

Note that if $\gamma_T$ is small, then by considering the mass on both sides of \eqref{eq:convergence in theorem statement} we see that for large $n$ a small proportion of the $u_ns$ will be alive at time $T$, leading to a large variance. If, on the other hand, $\gamma_t$ is small for some $t<T$, then all $f^ns$ at time $T$ will share a small number of ancestral paths at time $t$, leading to a large variance. Thus it is reasonable that we should seek to maximise $\min_{0\leq t\leq T}\gamma_t$. Therefore by \eqref{eq:asymptotics of killing probability} it is reasonable to choose $p$ such that
\begin{equation}
1-\frac{1-p}{Z}=r(K).
\end{equation}
Straightforward algebra shows that this is achieved by
\begin{equation}
p=[1+(1-r(K))\sum_{s=0}^T\Pm_{\mu}(\tau_{\partial}>s)r(K)^{-(s+1)}]^{-1}=\frac{r(K)}{(1-r(K))\mu(\phi)T}+\calO_{T\ra\infty}(\frac{1}{T^2}).
\end{equation}

\subsection*{Approximation of the QLDs of reducible killed Markov processes}

If a killed Markov process is reducible, even if the state space is finite it is an open problem to determine which QSDs (if any at all) will be obtained from the reinforced processes $(Y_t)_{0\leq t<\infty}$ described in the introduction, by taking the limit \eqref{eq:limit of historical measure of reinforced processes}. We suppose that for some $\mu,\pi\in\calP(\chi)$ we have
\[
\Law_{\mu}(X_t\lvert \tau_{\partial}>t)\ra \pi(\cdot)\quad\text{as}\quad  t\ra\infty.
\]
We also impose assumptions \ref{assum:ext to compact completion} and \ref{assum:inf time horizon}. The following corollary, which came from a discussion of the author with Michel Benaim, allows us to obtain the quasi-limiting distribution $\pi(\cdot)$. We construct for $0<p\leq 1$ a reinforced process with renewal $(Y^p_t)_{0\leq t<\infty}$ as follows. We firstly take a copy of the killed Markov process $(X_t)_{0\leq t<\tau_{\partial}}$ with initial condition $X_0\sim \mu$, and define $(Y^p_t)_{0\leq t<\tau_{\partial}}$ as in \eqref{eq:reinforced process up to the killing time}. At this killing time, with probability $p$ we "renew", sampling
\begin{equation}
Y^p_{\tau_{\partial}}\sim \mu(\cdot).
\end{equation}
Otherwise, with probability $1-p$, we sample from the empirical measure of the history,
\begin{equation}
Y^p_{\tau_{\partial}}\sim \frac{1}{\tau_{\partial}}\sum_{t=0}^{\tau_{\partial}-1}\delta_{Y^p_{t}}(\cdot),
\end{equation}
as in \eqref{eq:reinforced process sampling from the history}. The process $Y^p_t$ then continues evolving like a copy of $(X_t)_{0\leq t<\tau_{\partial}}$ up to its next killing time. This is then repeated inductively, so that at each killing time we sample from $\mu$ with probability $p$, otherwise sampling from the empirical measure of the history as in \eqref{eq:nth sampling reinforced process}. We are then able to obtain the quasi-limiting distribution $\pi(\cdot)$ from $(Y^p_t)_{0\leq t<\infty}$, for small $0<p\ll 1$.

\begin{cor}\label{cor:convergence of reinforced with renewal}
For given $0<p\leq 1$ we let $Z^p$ be the unique solution to \eqref{eq:fixed point eqn for Z} for $T=\infty$ and $(\gamma^p_t)_{0\leq t<\infty}$ be the coefficients thereby defined in \eqref{eq:formula for gamma}. Then we have
\begin{equation}
\frac{1}{t}\sum_{s=0}^{t-1}\delta_{Y^p_s}(\cdot)\overset{\text{a.s.}}{\ra}\pi^p(\cdot)\quad\text{as}\quad t\ra\infty,
\end{equation}
whereby
\begin{equation}
\pi_p(\cdot):=\sum_{t=0}^{\infty}\gamma^p_t\Law_{\mu}(X_t\lvert\tau_{\partial}>t)(\cdot)\ra \pi(\cdot)\quad\text{as}\quad p\ra 0.
\end{equation}
\end{cor}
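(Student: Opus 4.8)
The plan is to deduce the corollary from Theorem \ref{theo:main theorem} by coupling the reinforced process with renewal $(Y^p_t)_{0\leq t<\infty}$ to the infinite-horizon ($T=\infty$) simple reinforced path process of Definition \ref{defin:simple reinforced path process}. First I would observe that the two dynamics are identical: in each case one either renews from $\mu$ with probability $p$, or else starts a fresh copy of the killed Markov process from a spatial location drawn uniformly from all sites visited so far. For the path process this is exactly the choice of $m$ with probability $\propto(t^m_d-t^m_b)$ followed by a uniform $t'\in[t^m_b,t^m_d-1]$ and a restart from $f^m(t')$, while for $Y^p$ it is the sample from $\frac{1}{\tau_{\partial}}\sum_{t}\delta_{Y_t}$. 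Hence the two may be realised on a common probability space so that the $n^{\thh}$ excursion of $Y^p$ (the segment between its $(n-1)^{\thh}$ and $n^{\thh}$ killing times) coincides spatially with the alive portion $f^n_{\lvert_{[t^n_b,t^n_d-1]}}$ of $u_n$; in particular the $n^{\thh}$ excursion has length $t^n_d-t^n_b$, and the $N^{\thh}$ killing time satisfies $\tau^N_{\partial}=\sum_{n=1}^N(t^n_d-t^n_b)$.

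Under this coupling, evaluating the empirical measure of $Y^p$ at a killing time gives
\[
\frac{1}{\tau^N_{\partial}}\sum_{s=0}^{\tau^N_{\partial}-1}\delta_{Y^p_s}(\cdot)=\frac{\frac1N\sum_{n=1}^N\sum_{t=0}^{\infty}\Ind(t^n_b\leq t<t^n_d)\delta_{f^n(t)}(\cdot)}{\frac1N\sum_{n=1}^N(t^n_d-t^n_b)}.
\]
The numerator is the sum over $t$ of the time-$t$ marginals of the measures in \eqref{eq:convergence in theorem statement}, so by Theorem \ref{theo:main theorem} each summand converges a.s.\ to $\gamma^p_t\Law_{\mu}(X_t\lvert\tau_{\partial}>t)(\cdot)$, while the denominator equals $\tau^N_{\partial}/N$ and, being the total mass of the numerator, converges a.s.\ to $\sum_t\gamma^p_t=Z^p$ (the last identity is immediate from \eqref{eq:formula for gamma} and \eqref{eq:fixed point eqn for Z}). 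The interchange of $\lim_N$ with the sum over $t$ is then justified by a Scheffé-type argument: the per-slice masses converge and their total converges to the finite limit $Z^p$, so the tail $\sum_{t>L}\frac1N\#\{n\leq N:t^n_b\leq t<t^n_d\}$ is uniformly small. This yields
\[
\frac{1}{\tau^N_{\partial}}\sum_{s=0}^{\tau^N_{\partial}-1}\delta_{Y^p_s}(\cdot)\overset{a.s.}{\ra}\frac{1}{Z^p}\sum_{t=0}^{\infty}\gamma^p_t\Law_{\mu}(X_t\lvert\tau_{\partial}>t)(\cdot)=:\pi^p(\cdot),
\]
a probability measure precisely because $\sum_t\gamma^p_t=Z^p$.

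To pass from this subsequence to $\frac1t\sum_{s=0}^{t-1}\delta_{Y^p_s}\to\pi^p$ along all $t\to\infty$, I would interpolate between consecutive killing times: for $\tau^N_{\partial}\leq t<\tau^{N+1}_{\partial}$ the two empirical measures differ by at most $O((\tau^{N+1}_{\partial}-\tau^N_{\partial})/\tau^N_{\partial})$ in total variation, and since the excursion lengths have uniformly geometric tails (the state space is finite and $\Pm_x(\tau_{\partial}<\infty)=1$) one has $\max_{k\leq N}(t^k_d-t^k_b)/N\to0$ a.s.\ while $\tau^N_{\partial}\sim Z^pN$, so the increments are negligible. For the limit $p\to0$, I would write $\pi^p=\sum_t w_t(p)\Law_{\mu}(X_t\lvert\tau_{\partial}>t)$ with weights $w_t(p)=\gamma^p_t/Z^p\propto\Pm_{\mu}(\tau_{\partial}>t)\,r(p)^{-t}$, where $r(p)=1-\frac{1-p}{Z^p}$. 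Rewriting \eqref{eq:fixed point eqn for Z} as $\frac{(1-p)r}{p(1-r)}=\sum_{s}\Pm_{\mu}(\tau_{\partial}>s)r^{-s}$ and letting $p\to0$ forces the right-hand side to diverge, which can only happen as $r\downarrow\lambda_0$, giving $r(p)\downarrow\lambda_0$ and $Z^p\to\frac{1}{1-\lambda_0}$. As $r\downarrow\lambda_0$ the normalising series diverges while every partial sum stays bounded, so $\sum_{t\leq L}w_t(p)\to0$ for each fixed $L$; the weight escapes to infinity, and combining this with the hypothesis $\Law_{\mu}(X_t\lvert\tau_{\partial}>t)\to\pi$ gives $\pi^p\to\pi$ by a standard Abelian argument.

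The main obstacle I expect is not the $p\to0$ asymptotics — which the fixed-point equation \eqref{eq:fixed point eqn for Z} drives cleanly onto the spectral boundary $r=\lambda_0$ — but the two uniformity steps needed to convert the slice-wise, fixed-$t$ conclusion of Theorem \ref{theo:main theorem} into a statement about the full time-average: namely the interchange of the $N\to\infty$ limit with the infinite summation over time-slices $t$, and the fill-in between successive killing times. Both hinge on controlling the contribution of long or late-born excursions, for which the crucial quantitative input is the convergence of the mean per-iteration lifetime $\tau^N_{\partial}/N\to Z^p<\infty$ together with the geometric tail of $\tau_{\partial}$.
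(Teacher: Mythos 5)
Your overall architecture is the natural one and most of it is sound: the coupling of $(Y^p_t)_{0\leq t<\infty}$ with the $T=\infty$ path process of Definition \ref{defin:simple reinforced path process} (so that the $n^{\thh}$ excursion of $Y^p$ is the alive portion of $u_n$ and $\tau^N_{\partial}=\sum_{n\leq N}(t^n_d-t^n_b)$) is exact, the interpolation between killing times works because excursion lengths have uniformly geometric tails, and the $p\ra 0$ analysis is correct: the fixed point equation does force $r(p)=1-\frac{1-p}{Z^p}\downarrow\lambda_0$, the normalised weights $\gamma^p_t/Z^p$ then escape to infinity because the series $\sum_t\Pm_{\mu}(\tau_{\partial}>t)r^{-t}$ diverges as $r\downarrow\lambda_0$ while every partial sum stays bounded, and the Yaglom hypothesis finishes the argument. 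Your normalisation is also the internally consistent one: since $\sum_t\gamma^p_t=Z^p$ by \eqref{eq:fixed point eqn for Z}, the limit of the empirical probability measures must carry the factor $1/Z^p$, so the corollary's $\pi_p$ should indeed be read as $\frac{1}{Z^p}\sum_t\gamma^p_t\Law_{\mu}(X_t\lvert\tau_{\partial}>t)$.

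There is, however, a genuine gap at the pivotal step: your justification of $\tau^N_{\partial}/N\ra Z^p$ a.s.\ is circular. You assert that the denominator, ``being the total mass of the numerator, converges a.s.\ to $\sum_t\gamma^p_t=Z^p$'', and then use this very convergence to run the Scheff\'e argument that licenses interchanging $\lim_N$ with $\sum_t$. But the statement of Theorem \ref{theo:main theorem} is slice-wise: it gives $a^N_t:=\frac{1}{N}\sum_{n\leq N}\Ind(t^n_b\leq t<t^n_d)\ra\gamma^p_t$ for each \emph{fixed} $t$, which by Fatou only yields $\liminf_N\tau^N_{\partial}/N\geq Z^p$; the matching upper bound is exactly the assertion that no path-mass escapes to late times, and this does not follow from fixed-$t$ convergence (nor from $\expE[\tau^N_{\partial}/N]\leq\Gamma$, which controls only expectations). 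The geometric tail of $\tau_{\partial}$ controls \emph{long} excursions but not \emph{late-born} ones: a path born after time $L$ contributes to the tail $\sum_{t>L}a^N_t$ no matter how short its lifetime, and the distribution of birth times is itself produced by the reinforcement, so the uniform tail bound you need is of the same nature as the quantity you are trying to establish — it cannot be bootstrapped from the slice-wise limit.

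The fix requires reaching into the proof of Theorem \ref{theo:main theorem} rather than its statement. The urn formulation there compactifies all times $t>\bar T$ into the single point $\ast$, making $E$ a finite set, so that \eqref{eq:convergence to nu R} gives a.s.\ convergence of the whole measure $m_N/N$ on $E$ — in particular of its total mass and of its mass on $E_{\ast}$. Under your coupling this reads $\tau^N_{\partial}/N=m_N(E)/N\ra\nu R(E)=Z^p$ and $\frac{1}{N}m_N(E_{\ast})\ra\sum_{t>\bar T}\gamma^p_t$ a.s., the latter being exactly the uniform-in-$N$ tail control you invoke; with these two facts in hand, your Scheff\'e interchange, the identification of the limit, and the remainder of your proof go through as written.
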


\section{Proof of Theorem \ref{theo:main theorem}}\label{section:theorem proof inf time}
We recall that $\bar \chi$ is the completion of $\chi$, which by assumption is compact. We extend $K$ to $\bar \chi$ by setting $K(x,\cdot)=0$ for $x\in \partial\chi=\bar \chi\setminus \chi$, labelling this extended submarkovian kernel as $K$ by abuse of notation.

For (discrete and finite) time intervals $[t_1,t_2]\subseteq \Nm$, we write $F([t_1,t_2];\bar \chi)$ for the set of functions $[t_1,t_2]\ra \bar \chi$, which we equip with the uniform metric 
\[
d_{F;[t_1,t_2]}(f,g)=\sup_{t\in [t_1,t_2]}d(f(t),g(t)).
\]

We defer for later the proof of the following proposition.

\begin{prop}\label{prop:exist of a fixed point Z inf time horizon}
There exists a unique solution, $Z$, to \eqref{eq:fixed point eqn for Z}.
\end{prop}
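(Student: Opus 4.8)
The plan is to treat the right-hand side of \eqref{eq:fixed point eqn for Z} as a single scalar function and reduce the claim to an elementary monotonicity-plus-intermediate-value argument. Writing $q=1-p$ and
\[
G(z)=p\sum_{s=0}^T\Pm_{\mu}(\tau_{\partial}>s)\Big[1-\frac{q}{z}\Big]^{-(s+1)},
\]
I would seek the zeros of $H(z)=G(z)-z$ on $[1,\infty)$. For $z\ge 1$ one has $q/z\le q<1$, so every bracket is positive; at $z=1$ it equals $1-q=p$, and it increases to $1$ as $z\to\infty$. Since $\tfrac{d}{dz}(1-q/z)=q/z^2>0$, each summand $[1-q/z]^{-(s+1)}$ is, for $0<p<1$, strictly decreasing in $z$; as the $s=0$ term carries the strictly positive weight $\Pm_{\mu}(\tau_{\partial}>0)=1$, the function $G$ is strictly decreasing, hence $H$ is strictly decreasing. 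This already yields uniqueness. (When $p=1$ we have $q=0$, $G\equiv\sum_{s=0}^T\Pm_{\mu}(\tau_{\partial}>s)\ge1$ is constant, and $z=G$ has the obvious unique solution.)

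For existence when $T<\infty$, the sum is finite, so $G$ is finite and continuous on all of $[1,\infty)$. At the left endpoint $G(1)=\sum_{s=0}^T\Pm_{\mu}(\tau_{\partial}>s)\,p^{-s}\ge1$ (again using the $s=0$ term), so $H(1)\ge0$; as $z\to\infty$ the brackets tend to $1$ and $G(z)\to p\sum_{s=0}^T\Pm_{\mu}(\tau_{\partial}>s)$ stays bounded, so $H(z)\to-\infty$. The intermediate value theorem then produces a zero, unique by strict monotonicity, and $H(1)\ge0$ together with the monotonicity of $H$ forces this root $Z$ to satisfy $Z\ge1$.

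The one genuinely delicate point is the case $T=\infty$, where the series defining $G(z)$ need not converge for every $z$. The key input is the geometric decay $\Pm_{\mu}(\tau_{\partial}>s)\asymp\lambda_0^{\,s}$ as $s\to\infty$, where $\lambda_0\in[0,1)$ is the maximal real eigenvalue of the substochastic kernel $K$; this follows from Perron--Frobenius theory applied to $K$ via $\Pm_{\mu}(\tau_{\partial}>s)=\mu K^{s}\mathbf{1}$, and in particular gives $\expE_{\mu}[\tau_{\partial}]=\sum_s\Pm_{\mu}(\tau_{\partial}>s)<\infty$. Comparing with a geometric series, $G(z)$ converges exactly when $\lambda_0[1-q/z]^{-1}<1$, i.e. when $z>z^{*}:=\tfrac{q}{1-\lambda_0}$, and diverges to $+\infty$ on $[1,z^{*}]$ (when $z^{*}\ge1$). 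Thus $G$ is finite, continuous and strictly decreasing on $(\max(1,z^{*}),\infty)$, with $G(z)\to+\infty$ as $z\downarrow z^{*}$ and $G(z)\to p\,\expE_{\mu}[\tau_{\partial}]$ as $z\to\infty$; the same $H$-monotonicity argument locates a unique root, necessarily exceeding $\max(1,z^{*})$ and hence lying in $[1,\infty)$. I expect the main obstacle to be establishing precisely that $\Pm_{\mu}(\tau_{\partial}>s)\lambda_0^{-s}$ stays bounded away from $0$, so that divergence genuinely occurs at the endpoint $z=z^{*}$ as asserted in the remark: for a reducible $K$ the leading eigenvalue may carry nontrivial Jordan structure, and one must rule out that subexponential corrections restore convergence there.
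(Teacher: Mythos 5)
Your proposal follows essentially the same route as the paper: the paper works with $f_T(z)=z-G(z)$, shows $f_T(1)\le 0$, $f_T'\ge 1$ and $f_T(z)\to\infty$, and for $T=\infty$ identifies the same critical value $z_0=\frac{1-p}{1-\lambda_0}$ below which the series diverges, then applies the intermediate value theorem on $(z_0,\infty)\cap[1,\infty)$; your $H=G-z$ is just the sign-flipped version of this, with identical endpoint evaluations.

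The one point you flag as an obstacle is genuine in the sense that your stated asymptotic $\Pm_{\mu}(\tau_{\partial}>s)\asymp\lambda_0^{s}$ is false in general for reducible $K$: e.g.\ for $K=\begin{pmatrix}\lambda & 1-\lambda\\ 0&\lambda\end{pmatrix}$ one gets $\Pm_{\delta_1}(\tau_{\partial}>s)\asymp s\lambda^{s}$. However, the two one-sided estimates actually needed both hold, and this is exactly how the paper closes the argument. For the lower bound, decompose $\chi$ into communicating classes (assuming WLOG every class is accessible from $\mu$), pick an irreducible class $\chi_{i^*}$ whose Perron--Frobenius eigenvalue equals $\lambda_0$, and let $v>0$ be the Perron right eigenvector of $K_{i^*}$; choosing $t_0$ and $x\in\chi_{i^*}$ with $c_0:=\Pm_{\mu}(X_{t_0}=x)>0$, one gets
\[
\Pm_{\mu}(\tau_{\partial}>t_0+t)\;\ge\; c_0\,(K_{i^*}^{t}\mathbf{1})(x)\;\ge\; c_0\,\frac{v(x)}{\lVert v\rVert_{\infty}}\,\lambda_0^{t},
\]
so $\liminf_{t\to\infty}\lambda_0^{-t}\Pm_{\mu}(\tau_{\partial}>t)>0$ (this is the content of the paper's display \eqref{eq:ktprob of dying in time t}). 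The worry about Jordan structure goes the wrong way: since all entries are non-negative, nontrivial Jordan blocks at $\lambda_0$ can only inflate $\Pm_{\mu}(\tau_{\partial}>s)$ by polynomial factors, never deflate it, so they cannot restore convergence at $z^{*}$; and for the upper bound, $\Pm_{\mu}(\tau_{\partial}>s)\le C s^{m}\lambda_0^{s}$ suffices for convergence of $G(z)$ whenever $\lambda_0[1-q/z]^{-1}<1$, since the polynomial factor does not change the radius of convergence. With these two estimates substituted for the "$\asymp$" claim, your argument is complete and coincides with the paper's.
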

We will establish convergence by formulating the reinforced path process as an urn process, and applying \cite[Theorem 1]{Mailler2020}. We use the terminology given in \cite[Section 1.1]{Mailler2020} throughout. 

To identify the limit as being of the desired form, the following observation shall be crucial. We fix $0\leq t_1\leq t_2$. Take $(Y_u)_{0\leq u\leq t_1}$ and $(Z_u)_{t_1\leq u\leq t_2}$ such that $(Y_u)_{0\leq u\leq t_1}\sim \Law_{\mu}((X_u)_{0\leq u\leq t_1}\lvert \tau_{\partial}>t_1)$ and, conditional on $(Y_u)_{0\leq u\leq t_1}$, $(Z_u)_{t_1\leq u\leq t_2}\sim \Law_{Y_{t_1}}((X_{u-t_1})_{t_1\leq u\leq t_2})$. Then we have
\begin{equation}\label{eq:identity for law of path started from other path}
Y\oplus Z\sim\Law_{\mu}((X_u)_{0\leq u\leq t_2}\lvert \tau_{\partial}>t_1).
\end{equation}

\subsection*{The urn process formulation}

We must distinguish between the $T<\infty$ and $T=\infty$ cases. If $T=\infty$ we fix arbitrary $\bar T\in\Nm$ and define $\ast$ to be a point distinguished from $\Nm$. We define
\[
\begin{split}
E_t:=\{t\}\times F([0,t];\bar \chi)\quad\text{for all}\quad t\in \Nm,\quad
E_{\ast}:=\{\ast\}\times \bar \chi.
\end{split}
\]
We then define
\[
\begin{split}
E:=\cup_{t\in [0,T]}E_t\quad\text{if}\quad T<\infty,\quad E:=\cup_{t\in [0,T]}E_t\cup E_{\ast}\quad\text{if}\quad T=\infty,
\end{split}
\]
which we equip with the metric $d_E$ defined by
\[
d_E((t,f),(s,g))=\begin{cases}
1,\quad t\neq s\\
1\wedge d_{F;[0,t]}(f,g),\quad t=s\leq T\\
1\wedge d(f,g),\quad t=s=\ast
\end{cases},
\]
under which $E$ is a compact metric space. We recall that $(u_n)_{n=1}^{\infty}=((t^n_b,f^n,t^n_d))_{n=1}^{\infty}$ is our reinforced path process. We define
\[
\begin{split}
m_N(\cdot):=\sum_{n=1}^N v_n(\cdot)\in\calM(E)\quad\text{whereby}\\ v_n(\cdot):=\begin{cases}\sum_{t=t^n_b}^{t^n_d-1}\Ind(t\leq T)\delta_{(t,f^n_{\lvert_{[0,t]}})}(\cdot),\quad T<\infty\\
\sum_{t=t^n_b}^{t^n_d-1}\big(\Ind(t\leq \bar T)\delta_{(t,f^n_{\lvert_{[0,t]}})}(\cdot)+\Ind(t>\bar T)\delta_{(\ast,f^n(t))}(\cdot)\big),\quad T=\infty
\end{cases}.
\end{split}
\]

If $T<\infty$, we define $\Gamma=T+1$. If $T=\infty$, on the other hand, then $(\{x\in \chi:K^t1(x)=1\})_{t=1}^{\infty}$ is a descending sequence of compact sets, whose intersection must be empty by Assumption \ref{assum:inf time horizon}. Therefore $\lvert\lvert K^t\rvert\rvert_{\text{op}}=\sup_{x\in\chi}K^t1(x)<1$ for some $t<\infty$ large enough, so that the spectral radius of $K$, $r(K)$, is less than $1$. Thus
\begin{equation}\label{eq:spectral radius of K less than 1}
\limsup_{t\ra\infty}(\sup_{x\in\chi}\Pm_x(\tau_{\partial}>t))^{\frac{1}{t}}=\limsup_{t\ra\infty}(\lvert\lvert K^t\rvert\rvert_{\text{op}})^{\frac{1}{t}}=r(K)<1,
\end{equation}
so that $\sup_{x\in\chi}\expE_x[\tau_{\partial}]<\infty$. Therefore we may define $\Gamma:=1+\sup_{x\in \chi}\expE_x[\tau_{\partial}]$ in the case that $T=\infty$.

We observe that $(m_N)_{N\geq 1}$ is a measure-valued Polya process with:
\begin{itemize}
\item
Initial composition
\[
m_1=\begin{cases}\sum_{t=0}^{\tau_{\partial}-1}\delta_{(t,(X^{\mu}_{s})_{0\leq s\leq t})}\\
\sum_{t=0}^{\tau_{\partial}-1}\big(\delta_{(t,(X^{\mu}_{s})_{0\leq s\leq t})}\Ind(t\leq \bar T)+\delta_{(\ast,X^{\mu}_t)}\Ind(t>\bar T)\big)
\end{cases},
\]
whereby $(X^{\mu}_t)_{0\leq t<\tau_{\partial}}$ is an independent copy of the killed Markov process with submarkovian transition kernel $K$ and initial condition $X^{\mu}_0\sim \mu$.
\item
Independent and identically distributed random replacement kernels 
\[
E\ni (t,f)\mapsto R^n((t,f);.)\in \calM(E),\quad 1\leq n<\infty,
\]
defined as follows. We take for each $n$, independently of each other and everything else, a Bernoulli random variable $B\sim \text{Ber}(p)$, a copy $(X^{\mu}_t)_{0\leq t<\tau_{\partial}}$ of the killed Markov process with submarkovian transition kernel $K$ and initial condition $X_0\sim \mu$, and a family of copies $\{(X^x_t)_{0\leq t<\tau_{\partial}}:x\in \chi\}$ of the same killed Markov process with initial conditions $X^x_0=x$. Note that this last definition makes sense since there exists a probability space $(\Omega,\Pm)$ and a measurable function $F:(\bar \chi \sqcup\partial)\times \Omega\ra \bar \chi \sqcup \partial$ such that for all $x\in \bar \chi\sqcup \partial$,
\[
X^x:\Omega\ni \omega\mapsto F(x,\omega)
\]
is a random variable with distribution $X^x\sim K(x,\cdot)$.

When $T<\infty$ we define the random kernel
\[
R^n((t,f);\cdot)=\begin{cases}
\sum_{s=0}^{\tau_{\partial}-1}\Ind(s\leq T)\delta_{(s,(X^{\mu}_{u})_{0\leq u\leq s})}(\cdot),\quad B=1\\
\sum_{s=0}^{\tau_{\partial}-1}\Ind(t+s\leq T)\delta_{(t+s,f\oplus (X^{f(t)}_{u-t})_{t\leq u\leq t+s})}(\cdot),\quad B=0
\end{cases}.
\]
We adopt the convention that $\ast+s:=\ast>\bar T$ for $s\in \Nm$ and $f(\ast):=f$ for $(\ast,f)\in E_{\ast}$. When $T=\infty$ we define the random kernel $R^n$ as
\[
R^n((t,f);\cdot):=\begin{cases}
\sum_{s=0}^{\tau_{\partial}-1}\big[\Ind(s\leq \bar T)\delta_{(s,(X^{\mu}_{u})_{0\leq u\leq s})}(\cdot)+\Ind(s> \bar T)\delta_{(\ast,X_{s})}\big],\quad B=1\\
\sum_{s=0}^{\tau_{\partial}-1}\big[\Ind(t+s\leq \bar T)\delta_{(t+s,f\oplus (X^{f(t)}_{u-t})_{t\leq u\leq t+s})}(\cdot)\\+\Ind(t+s> \bar T)\delta_{(\ast,X^{f(t)}_{s})}\big],\quad B=0
\end{cases}.
\]

These random kernels $R^n$ have common expectation given by the (deterministic) kernel $R:E\ra \calM(E)$, which in the $T<\infty$ case is given by
\[
\begin{split}
R((t,f);\cdot)=\expE[R^n((t,f);\cdot)]=p\sum_{s=0}^{T}\Law_{\mu}((s,(X_{u})_{0\leq u\leq s})\lvert \tau_{\partial}>s)(\cdot)\Pm_{\mu}(\tau_{\partial}>s)\\
+(1-p)\sum_{s=0}^{T-t}\Law_{f(t)}((t+s,f\oplus (X_{u-t})_{t\leq u\leq t+s})\lvert \tau_{\partial}>s)(\cdot)\Pm_{f(t)}(\tau_{\partial}>s).
\end{split}
\]
In the $T=\infty$ case the kernel $R:E\ra \calM(E)$ is given by
\[
\begin{split}
R((t,f);\cdot)=\expE[R^n((t,f);\cdot)]=p\sum_{s=0}^{\infty}\Big[\Ind(s\leq \bar T)\Law_{\mu}((s,(X_{u})_{0\leq u\leq s})\lvert \tau_{\partial}>s)(\cdot)\Pm_{\mu}(\tau_{\partial}>s)\\
+\Ind(s> \bar T)\Law_{\mu}((\ast,X_{s})\lvert \tau_{\partial}>s)(\cdot)\Pm_{\mu}(\tau_{\partial}>s)\Big]\\
+(1-p)\sum_{s=0}^{\infty}\Big[\Ind(t+s\leq \bar T)\Law_{f(t)}((t+s,f\oplus (X_{u-t})_{t\leq u\leq t+s})\lvert \tau_{\partial}>s)(\cdot)\Pm_{f(t)}(\tau_{\partial}>s)\\
+\Ind(t+s> \bar T)\Law_{f(t)}((\ast,X_{s})\lvert \tau_{\partial}>s)(\cdot)\Pm_{f(t)}(\tau_{\partial}>s)\Big].
\end{split}
\]
\item
Non-negative weight kernel 
\begin{equation}
P:E \ni (t,f)\mapsto \frac{1}{\Gamma}\delta_{(t,f)}(\cdot)\in \calM(E).
\end{equation}

\end{itemize}

We therefore define the (deterministic) submarkovian kernel $Q$ as
\begin{equation}
Q:E \ni (t,f)\mapsto \int_{E}P(z;\cdot)R((t,f);dz)=\frac{1}{\Gamma}R((t,f);\cdot)\in \calM(E).
\end{equation}

By Assumption \ref{assum:ext to compact completion}, we see that $m_N$ is an urn process on $E$ satisfying assumptions \cite[$T_{>0}$, (A1), (A2) and (A4)]{Mailler2020}. 

We must therefore verify \cite[Assumption (A3)]{Mailler2020}, that is we must establish the following proposition.

\begin{prop}\label{prop:(A3) verification}
The $ E$-valued continuous-time killed Markov process $(Y^c_t)_{t<\tau^{Y^c}_{\partial}}$ with submarkovian infinitesimal generator $ Q-\text{Id}$ converges uniformly to quasi-equilibrium.
\end{prop}
We defer for later the proof of Proposition \ref{prop:(A3) verification}.

\subsection*{Identifying the limit}

Thus $(Y^c_t)_{t<\tau^{Y^c}_{\partial}}$ admits a unique QSD, $\eta$. Since we have verified Assumption \cite[(A3)]{Mailler2020}, we may invoke \cite[Theorem 1]{Mailler2020}, giving that
\begin{equation}\label{eq:convergence to nu R}
\frac{m_N}{N}(\cdot)\overset{\calM(E)} {\ra}\eta R(\cdot)\quad\text{almost surely}.
\end{equation}

Since QSDs of $(Y_t)_{t<\tau^Y_{\partial}}$ correspond to solutions of
\begin{equation}\label{eq:left e-measure eqn Y}
\alpha(\cdot)=\frac{\alpha Q(\cdot)}{\alpha Q(E)}=\frac{\alpha R(\cdot)}{\alpha R(E)},\quad \alpha\in\calP(E),
\end{equation}
$\eta$ is the unique solution to \eqref{eq:left e-measure eqn Y}.

We assume for the time being that $T<\infty$ and write
\begin{equation}
\tilde{\eta}(\cdot):=\sum_{t=0}^{T}\frac{\gamma_t}{Z}\Law_{\mu}((t,(X_s)_{0\leq s\leq t})\lvert \tau_{\partial}>t)\in\calP(E).
\end{equation} 
We use \eqref{eq:identity for law of path started from other path} and straightforward algebra to calculate
\[
\begin{split}
\tilde{\eta}R(\cdot)=p\sum_{s=0}^T\Pm_{\mu}(\tau_{\partial}>s)\Law_{\mu}((s,(X_u)_{0\leq u\leq s})\lvert \tau_{\partial}>s)
+(1-p)\sum_{s=0}^T\sum_{t=0}^T\Ind(s\geq t)\frac{\gamma_t}{Z}\\ \int_{F([0,t])}\Law_{f(t)}((s,f\oplus (X_{u-t})_{t\leq u\leq s})\lvert \tau_{\partial}>s)(\cdot)\Pm_{f(t)}(\tau_{\partial}>s)d\Law_{\mu}((X_u)_{0\leq u\leq t}\lvert \tau_{\partial}>t)(df)\\
\overset{\eqref{eq:identity for law of path started from other path}}{=}
p\sum_{s=0}^T\Pm_{\mu}(\tau_{\partial}>s)\Law_{\mu}((s,(X_u)_{0\leq u\leq s})\lvert \tau_{\partial}>s)
\\+(1-p)\sum_{s=0}^T\sum_{t=0}^T\Ind(s\geq t)\frac{\gamma_t}{Z}\Pm_{\mu}(\tau_{\partial}>s\lvert \tau_{\partial}>t)\Law_{\mu}((s,(X_u)_{0\leq u\leq s})\lvert \tau_{\partial}>s)(\cdot)\\
=\sum_{s=0}^Tp\Pm_{\mu}(\tau_{\partial}>s)\Big[1+\frac{1-p}{Z}\sum_{t=0}^s\Big(1-\frac{1-p}{Z}\Big)^{-(t+1)}\Big]\Law_{\mu}((s,(X_u)_{0\leq u\leq s})\lvert \tau_{\partial}>s)(\cdot)\\
=\sum_{s=0}^Tp\Pm_{\mu}(\tau_{\partial}>s)\Big[1-\frac{1-p}{Z}\Big]^{-(s+1)}\Law_{\mu}((s,(X_u)_{0\leq u\leq s})\lvert \tau_{\partial}>s)(\cdot)=Z\tilde{\eta}(\cdot)
\end{split}
\]
Therefore $\tilde{\eta}(\cdot)$ is the solution to \eqref{eq:left e-measure eqn Y}, hence $\eta=\tilde{\eta}$. Therefore
\[
\eta R(\cdot)=Z\tilde{\eta}(\cdot)=\sum_{t=0}^T\gamma_t\Law_{\mu}((t,(X_u)_{0\leq u\leq t})\lvert \tau_{\partial}>t).
\]
Combining this with \eqref{eq:convergence to nu R} we have 
\begin{equation}\label{eq:convergence of approx in bar chi}
\frac{1}{N}\sum_{n=1}^N\Ind(t^n_b\leq t<t^n_d)\delta_{f^n_{[0,t]}}(\cdot)\overset{\calM(F([0,t];\bar\chi))}{\ra} \gamma_t\Law_{\mu}((X_s)_{0\leq s\leq t}\lvert \tau_{\partial}>t)(\cdot)\quad \text{almost surely as}\quad N\ra\infty.
\end{equation}
Note that convergence in $\calM(F([0,t];\bar \chi))$ of measures supported on $F([0,t];\chi)$ to a measure supported on $F([0,t];\chi)$ implies convergence in $\calM(F([0,t];\chi))$ (this is easy to prove using the Portmanteau theorem). Since both sides of \eqref{eq:convergence of approx in bar chi} are supported on $F([0,t];\chi)$, we have \eqref{eq:convergence in theorem statement} in the $T<\infty$ case. In the $T=\infty$ case we consider
\[
\tilde{\eta}(\cdot):=\sum_{t=0}^{\bar{T}}\frac{\gamma_t}{Z}\Law_{\mu}((t,(X_u)_{0\leq u\leq t}\lvert \tau_{\partial}>t)(\cdot)+\sum_{t=\bar T+1}^{\infty}\frac{\gamma_t}{Z}\Law_{\mu}(X_t\lvert \tau_{\partial}>t)(\cdot),
\]
and repeat the above calculation to obtain \eqref{eq:convergence in theorem statement} for all $t\leq \bar T$. Since $\bar T\in \Nm$ was arbitrary, we have \eqref{eq:convergence in theorem statement} for all $t\in\Nm$.

We have left only to prove propositions \ref{prop:exist of a fixed point Z inf time horizon} and \ref{prop:(A3) verification}. We begin with Proposition \ref{prop:(A3) verification}.
\subsection*{Proof of Proposition \ref{prop:(A3) verification}}

We seek to check that $(Y^c_t)_{t<\tau^{Y^c}_{\partial}}$ satisfies \cite[(A1) and (A2)]{Champagnat2014}, which implies Proposition \ref{prop:(A3) verification} by \cite[Theorem 2.1]{Champagnat2014}. The former is immediate. We have left to check \cite[(A2)]{Champagnat2014}.

We claim that it is sufficient to show that the $ E$-valued discrete-time killed Markov process $(Y_n)_{n<\tau^{Y}_{\partial}}$ with submarkovian infinitesimal generator $ Q$ satisfies \cite[(A1) and (A2)]{Champagnat2014}. To see this, note that $ Q$ would then have a bounded non-negative right-eigenfunction as given by \cite[Proposition 2.3]{Champagnat2014}, which must then be a bounded non-negative right-eigenfunction for $ Q-\text{Id}$. Furthermore, since $(Y^c_t)_{t<\tau^{Y^c}_{\partial}}$ satisfies \cite[(A1)]{Champagnat2014}, this may then be combined with the existence of the bounded, non-negative right eigenfunction to see that $(Y^c_t)_{t<\tau^{Y^c}_{\partial}}$ satisfies \cite[(A2)]{Champagnat2014}.

Thus it is sufficient to check that $(Y_n)_{n<\tau^{Y}_{\partial}}$ satisfies \cite[(A1) and (A2)]{Champagnat2014}. It is trivial that it satisfies \cite[(A1)]{Champagnat2014} with
\[
\nu(\cdot)=\frac{1}{\expE_{\mu}[\tau_{\partial}\wedge (T+1)]}\sum_{s=0}^{T}\Law_{\mu}((s,(X_{u})_{0\leq u\leq s})\lvert \tau_{\partial}>s)(\cdot)\Pm_{\mu}(\tau_{\partial}>s),
\]
in the language of \cite[(A1)]{Champagnat2014}. It is left to check \cite[(A2)]{Champagnat2014}, for this same $\nu(\cdot)$. We separate the $T<\infty$ and $T=\infty$ cases.

\subsubsection*{The $T<\infty$ case}

We note that we can write $Q=Q_0+Q_1+Q_2$ whereby
\[
\begin{split}
\delta_{(t,f)}Q_0(\cdot)=c\nu(\cdot),\quad c=\frac{p\expE_{\mu}[\tau_{\partial}\wedge (T+1)]}{\Gamma},\\
Q_1=\frac{1-p}{\Gamma}\text{Id},\quad \delta_{(t,f)}Q_2(\cdot)\quad\text{is supported on}\quad \{(t',f')\in E:t'\geq t+1\}.
\end{split}
\]
We write
\[
\begin{split}
\delta_{(t,f)}Q^n1=\delta_{(t,f)}[(Q_1+Q_2)^n+\sum_{m=0}^{n-1}(Q_1+Q_2)^mQ_0Q^{n-m-1}]1\\
=\delta_{(t,f)}(Q_1+Q_2)^n1+c\sum_{m=0}^{n-1}[\delta_{(t,f)}(Q_1+Q_2)^m1][\nu Q^{n-m-1}1].
\end{split}
\]

We observe that $Q_1$ and $Q_2$ commute, and that $Q_2^{T+1}=0$. Thus
\[
(Q_1+Q_2)^m1=\sum_{k=0}^{T+1}\begin{pmatrix}
m\\
k
\end{pmatrix}\Big(\frac{1-p}{\Gamma}\Big)^{m-k}Q_2^k1\leq C(m^{T+1}+1)\Big(\frac{1-p}{\Gamma}\Big)^{m}
\]
for some $C<\infty$. We now observe for $0\leq m\leq n-1$ that 
\[
\nu Q^n 1\geq \nu (Q_0+Q_1)^{m+1}Q^{n-m-1}1\geq \Big(c+\frac{1-p}{\Gamma}\Big)^{m+1}\nu Q^{n-m-1}1.
\]
Therefore combining the above we have
\[
\begin{split}
\delta_{(t,f)}Q^n1\leq C(n^{T+1}+1)\Big(\frac{1-p}{\Gamma}\Big)^{n}+\Big[c\sum_{m=0}^{n-1}C(m^{T+1}+1)\Big(\frac{1-p}{\Gamma}\Big)^{m}\Big(c+\frac{1-p}{\Gamma}\Big)^{-(m+1)}\Big]\nu Q^n 1\\
\leq C'\sum_{m=0}^{n}\Big[(m^{T+1}+1)\Big(\frac{1-p}{\Gamma}\Big)^{m}\Big(c+\frac{1-p}{\Gamma}\Big)^{-(m+1)}\Big]\nu Q^n 1,
\end{split}
\]
for some $C'<\infty$. Therefore there exists a constant $M$, independent of ${(t,f)}$ and $n$, such that $\delta_{(t,f)}Q^n1\leq M\nu Q^n 1$ for all ${(t,f)}\in E$ and $n\geq 1$, which implies that $(Y_t)_{t<\tau^{Y}_{\partial}}$ satisfies \cite[(A2)]{Champagnat2014}.

\subsubsection*{The $T=\infty$ case}

Since the spectral radius of $K$ is less than $1$, \eqref{eq:spectral radius of K less than 1}, the following operator is well-defined,
\[
G=\sum_{n=0}^{\infty}K^n:C_b(\chi)\ra C_b(\chi).
\]
We observe that if $(Y_n)_{n<\tau_{\partial}^Y}=((t^n,f^n))_{n<\tau_{\partial}^Y}$, then $(Z_n)_{n<\tau^Z_{\partial}}:=(f^n(t^n))_{n<\tau^Y_{\partial}}$ is a killed Markov chain on $\bar \chi$ with submarkovian kernel
\[
S(x,\cdot):=\frac{p}{\Gamma}G(\mu,\cdot)+\frac{1-p}{\Gamma}G(x,\cdot).
\]
Since $K$ is compact, $S(x,\cdot)-\frac{1-p}{\Gamma}\delta_x(\cdot)$ defines a positive compact operator on $C_b(\chi)$ with positive spectral radius. Thus by the Krein-Rutman theorem, there exists a non-negative right eigenfunction $\phi\in C_b(\chi)$, which must then be a right eigenfunction for $S$. Since $(Z_n)_{n<\tau^Y_{\partial}}$ also satisfies \cite[(A1)]{Champagnat2014}, being minorised by $\frac{p}{\Gamma}G(\mu,\cdot)$, it must then satisfy \cite[(A2)]{Champagnat2014}: there exists $C<\infty$ such that
\begin{equation}\label{eq:A2 for subchain}
\Pm_x(\tau^Z_{\partial}>n)\leq C\Pm_{\frac{G(\mu,\cdot)}{G(\mu,1)}}(\tau^Z_{\partial}>n)
\end{equation}
for all $n\geq 0$ and $x\in\chi$. Since $(\tilde{t},\tilde{f})\sim \nu$ implies that $\tilde{f}(\tilde{t})\sim \frac{G(\mu,\cdot)}{G(\mu,1)}$, \eqref{eq:A2 for subchain} then implies that
\[
\Pm_{(t,f)}(\tau^Y_{\partial}>n)\leq C\Pm_{\nu}(\tau^Y_{\partial}>n).
\]
\qed

\subsection*{Proof of Proposition \ref{prop:exist of a fixed point Z inf time horizon}}

We consider for $T<\infty$ the function
\begin{equation}\label{eq:fn fT}
f_T:[1,\infty)\ni z\mapsto z-\sum_{s=0}^T\Pm_{\mu}(\tau_{\partial}>s)p\Big[1-\frac{1-p}{z}\Big]^{-(s+1)}\in \Rm.
\end{equation}
Solutions to $f_T(z)=0$, $z\in [1,\infty)$ correspond to solutions to \eqref{eq:fixed point eqn for Z}. We calculate for $T<\infty$,
\[
f_T(1)\leq 1-\Pm_{\mu}(\tau_{\partial}>0)= 0,\quad f_T'(z)\geq 1\quad\text{for all}\quad z\in [1,\infty)\quad \text{and}\quad f_T(z)\ra \infty\quad \text{as}\quad z\ra \infty,
\]

so that there exists a unique solution to \eqref{eq:fixed point eqn for Z}. 

We now consider the $T=\infty$ case. We let $r=r(K)$ be the spectral radius of $K$. We claim that
\begin{equation}\label{eq:ktprob of dying in time t}
\liminf_{t\ra \infty}k^t\Pm_{\mu}(\tau_{\partial}>t)>0\quad\text{for}\quad \frac{1}{r}\leq k<\infty,\quad\lim_{t\ra \infty}k^t\Pm_{\mu}(\tau_{\partial}>t)=0\quad\text{for}\quad k< \frac{1}{r},
\end{equation}
defining $\frac{1}{r}:=+\infty$ in the case that $r(K)=0$.

We have this for $k\neq \frac{1}{r}$ by \eqref{eq:spectral radius of K less than 1}. If $r=0$ then the $k=\frac{1}{r}$ case is vacuous. Otherwise, the Krein-Rutman theorem implies the existence of a positive right eigenfunction $\phi\in C_0(\chi)$, with eigenvalue $r(K)$. This implies that
\[
r^{-t}\Pm_{\mu}(\tau_{\partial}>t)=r^{-t}\mu K^t1\geq \frac{r^{-t}\mu K^t \phi}{\lvert\lvert \phi\rvert\rvert_{\infty}}=\frac{\mu(\phi)}{\lvert\lvert \phi\rvert\rvert_{\infty}}>0 \quad\text{for all}\quad t\geq 0,
\]
implying the $k=\frac{1}{r}$ case.

Therefore the infinite sum $\sum_{s=0}^{\infty}\Pm_{\mu}(\tau_{\partial}>s)p[1-\frac{1-p}{z}]^{-(s+1)}$ is well-defined for
\[
z\in (z_0,\infty)\cap [1,\infty)\quad\text{whereby}\quad z_0:=\frac{1-p}{1-r}<z<\infty,
\]
so that we may define $f_{\infty}$ on $(z_0,\infty)\cap [1,\infty)$ similarly to \eqref{eq:fn fT}. Moreover \eqref{eq:ktprob of dying in time t} gives that $f_{\infty}$ is continuous and strictly increasing. It also gives that $f_{\infty}(z)\ra -\infty$ as $z\downarrow z_0$ if $z_0\geq 1$. If $z_0<1$ (which is the case if $r=0$) we observe that $f_{\infty}(1)\leq 0$ as with $T<\infty$. Therefore we have the existence of a unique solution to \eqref{eq:fixed point eqn for Z} as with $T<\infty$.\qed

This concludes the proof of Theorem \ref{theo:main theorem}. \qed

{\textbf{Acknowledgement:}}  This work was funded by grant 200020 196999 from the Swiss National Foundation. The author would like to thank Michel Benaim for useful discussions on the convergence (or lack thereof) of reinforced processes, leading in particular to Corollary \ref{cor:convergence of reinforced with renewal}.

\bibliography{StochApproxKilledMPLibrary}
\bibliographystyle{plain}
\end{document}